\def\div{\text{div$\,$}}
\def\ep{\bm \epsilon}
\def\sig{\bm \sigma}
\def\bmU{\boldsymbol U}
\def\bmH{\boldsymbol H}
\def\bmf{\boldsymbol f}
\def\bmu{\boldsymbol u}
\def\bmv{\boldsymbol v}
\def\bmw{\boldsymbol w}
\def\bmn{\boldsymbol n}
\def\bmx{\boldsymbol x}
\def\bme{\boldsymbol e}
\def\euo{\bme_{\bmu}^i}
\def\epo{e_p^i}
\def\eun{\bme_{\bmu}^{i+1}}
\def\epn{e_p^{i+1}}
\theoremstyle{thmstyleone}%
\newtheorem{theorem}{Theorem}
\newtheorem{lemma}[theorem]{Lemma}%
\theoremstyle{thmstyletwo}%
\newtheorem{remark}{Remark}%
\theoremstyle{thmstylethree}%
\newtheorem{assumption}{Assumption}%
\begin{document}

\title[Fixed-stress split method for nonlinear poroelasticity]{A fixed-stress type splitting method for nonlinear poroelasticity}


\author[1,1]{\fnm{Johannes} \sur{Kraus}}\email{johannes.kraus@uni-due.de}
\equalcont{These authors contributed equally to this work.}

\author[2,2]{\fnm{Kundan} \sur{Kumar}}\email{Kundan.Kumar@uib.no}
\equalcont{These authors contributed equally to this work.}

\author[3,3]{\fnm{Maria} \sur{Lymbery}}\email{mariadimitrova.lymbery@uk-essen.de}
\equalcont{These authors contributed equally to this work.}

\author*[4,2]{\fnm{Florin A.} \sur{Radu}}\email{Florin.Radu@uib.no}
\equalcont{These authors contributed equally to this work.}

\affil[1]{\orgdiv{Faculty of Mathematics}, \orgname{University of Duisburg-Essen}, \orgaddress{\street{Thea-Leymann-Stra\ss e 9}, \city{Essen}, \postcode{45127}, \country{Germany}}}

\affil*[2]{\orgdiv{Center for Modeling of Coupled Subsurface Dynamics, Department of Mathematics}, \orgname{University of Bergen}, \orgaddress{\street{All\'{e}gaten 41}, \city{Bergen}, \postcode{5020}, \country{Norway}}}

\affil[3]{\orgdiv{Institute for Artificial Intelligence in Medicine}, \orgname{University Hospital Essen}, \orgaddress{\street{Girardetstra\ss e 2}, \city{Essen}, \postcode{45131}, \country{Germany}}}


\abstract{
In this paper we consider a nonlinear poroelasticity model that describes the quasi-static mechanical
behaviour of a fluid-saturated porous medium whose permeability depends on the divergence of the
displacement. Such nonlinear models are typically used to study biological structures like tissues,
organs, cartilage and bones, which are known for a nonlinear dependence of their permeability/hydraulic
conductivity on solid dilation.

We formulate (extend to the present situation) one of the most popular splitting schemes, namely
the fixed-stress split method for the iterative solution of the coupled problem. The method is proven
to converge linearly for sufficiently small time steps under standard assumptions. The error contraction
factor then is strictly less than one, independent of the Lam\'{e} parameters, Biot and storage coefficients
if the hydraulic conductivity is a strictly positive, bounded and Lipschitz-continuous function.
}

\keywords{nonlinear Biot model, soft-material poromechanics, fixed-stress splitting, convergence analysis, 
parameter robustness}



\maketitle

\section{Introduction}\label{sec1}

Coupling of flow and mechanical deformations in a porous medium,
referred as poroelasticity,  is relevant 
{\color{teal}to}
several applications.
These include subsurface deformations resulting from hydrocarbon
recovery \cite{detournay1993fundamentals, DetournayCheng1988} or anthropogenically induced seismic events due to geothermal
stimulations \cite{jha2014coupled}. In biological context, there are several examples where
such a coupling is needed. For example, this is used to model the
interaction between the fluid flow within the bone's microstructure
due to interstitial fluid and the mechanical deformation of the bone \cite{cowin1999bone}.
Similarly, the tumor growth and angiogenesis involves the interaction
of solid tumor tissue, interstitial fluid flow, and angiogenesis \cite{roose2003solid}.
Cartilage mechanics \cite{carter2003modelling}, biomechanics of soft tissues \cite{konofagou2001poroelastography},  brain tissues \cite{kyriacou2002brain, bohr2022glymphatic} are
other examples where the coupling of fluid flow and the deformation of
solid matrix needs to be taken into account. 

\par This paper concerns the numerical solution of the coupled flow and mechanical deformation in the context of poroelasticity. The Biot equations are the most common approach of modelling poroelasticity. In a quasi-static model  they comprise  a set of partial differential equations describing the flow and displacement using the physical laws of
 mass and momentum conservation,  see Section \ref{subsec2.1} for the model equations. Together with linear constitutive models, the
mechanical deformations are captured by an elliptic (linear elasticity) equation for the displacement and the fluid pressure is
described by a parabolic equation for the evolution of the pressure.
The coupling terms quantify the deformation and stress fields due to changes in
the fluid pressure in addition to taking into account the changes in
porosity due to mechanical deformations. Starting with the pioneering works of  Terzaghi \cite{terzaghi1996soil} and Biot \cite{Biot1941}, it was used to model the  consolidation of soils \cite{Biot1941, Biot1955}.  A comprehensive discussion of the theory of poromechanics can be found in \cite{Coussy1995}. The standard reference for the well-posedness theory for the Biot model is \cite{Showalter2000} and for more recent extensions, we refer to  \cite{ShowalterStefanelli2004}.  The quasi-static Biot model  can be regarded as the singular limit of the fully
dynamic Biot-Allard problem  \cite{MikelicWheeler2012} after neglecting the acceleration of solid in the mechanics part.

\par In this work, we consider the case when the hydraulic conductivity $K$  changes due
to the evolving stress conditions, that is,  upon the dilatation (local volume change), $\div \bmu$, i.e., $K = K(\div \bmu)$.   The ease with which a fluid flows through the solid skeleton of porous media is characterized by the hydraulic conductivity tensor. It is a function of porosity and tortuosity (a measure of geometric complexity of the porous medium, see Karmen Kozeny relationships). Changes in porosity due to mechanical deformations are captured using the term $\div \bmu$. Accordingly, the hydraulic conductivity is a function of $\div \bmu$.  Such a stress dependent permeability model was introduced first in \cite{hiltunen1995mathematical} to describe poroelasticity effects in the paper production process. Same type of model is used to account for stress sensitive reservoirs in petroleum related applications \cite{raghavan2002productivity, chin2000fully}. Mathematically, these extended Biot's models have been studied in \cite{barbeiro2010priori, cao2013analysis, cao2014steady, cao2015quasilinear},  where the authors have proved existence and uniqueness of solutions and developed the numerical approximations and their analysis. A comprehensive study of a nonlinear Biot model including the stress dependent permeability case and variations of boundary conditions can be found in \cite{bociu2016analysis}. For a recent work developing further the theory of nonlinear single phase poroelasticity, we refer to \cite{van2023mathematical}. Numerically, extensions of single phase nonlinear Biot models have been treated in \cite{borregales2018robust, borregales2019partially, borregales2021iterative}.

 To solve the Biot model numerically,   there are two main categories of schemes:
fully implicit and fully explicit. The fully implicit coupling
approach considers fluid variables such as pressure and kinematic
variable displacement as unknowns at each time step and linearizes the
fully coupled system to obtain the Jacobian. This approach allows one
to take larger time steps and provides more stability. However, there
are some important disadvantages. First, the coupled linear system is
difficult to solve, especially in multiphase flows where the operator
of the geomechanics noticeably differs from that of the flow problem.
Second, a fully coupled system may lose some flexibility, for
instance, it does not permit the use of large time steps for the
geomechanical response compared to the flow time step (this may be
exploited to develop multirate schemes \cite{Kumaretal2015, Almanietal2016}). Finally, the implementation
of a fully implicit approach is more difficult compared to individual
equations.  In contrast to the implicit approach, an explicit coupling
approach allows one to take the output from the mechanics (e.g.,
displacement) and use it as an input for the flow problem. This is 
simpler to implement and also allows more flexibility, for example in
choosing time steps for individual equations separately. However,
performing a naive explicit decoupling leads to unstable schemes or at
best conditionally stable schemes. 

\par An elegant way to combine the advantages of the above two broad approaches is to consider an iterative scheme. In this, at each step in time, the flow problem is solved followed by the mechanics problem using the pressure from the first step. 
The procedure is repeated until the desired convergence is reached. Since we treat the equations separately, both the ease
of implementation and flexibility are retained.  However, to ensure stability and robustness, the design of an 
iterative scheme demands careful considerations.   For the fully-coupled problems, iterative methods can be used to construct  efficient preconditioning techniques for the arising algebraic systems, the investigation of 
which is a matter of active ongoing scientific research, (see, e.g.,  \cite{Allen2009,Whiteetall2016,CastellettoWhiteFerronato2016,GasparRodrigo2017}) including the recently developed parameter-robust methods in \cite{hong2019conservative, LeeMardalWinther2017, HongKraus2017, hong2020parameter,Hong_parameter-robust_2020}). 

\par Two main iterative coupling schemes to solve the flow problem coupled with geomechanics in an iteratively sequential manner are the fixed-stress split and the undrained split schemes \cite{SettariMourits1998,hong2020parameter,Allen2009} (see \cite{both2019gradient} for a variational derivation for these iterative schemes).  In the fixed-stress split iterative scheme \cite{KimTchelepiJuanes2011FixedStress, MikelicWheeler2012}, a constant volumetric mean total stress is assumed during the flow solve, whereas in the undrained split scheme \cite{KimTchelepiJuanes2011DrainedSplit, TAKA}, a constant fluid mass is assumed during the mechanics solve. In the linear Biot case, both schemes were shown to be convergent in 
\cite{MikelicWheeler2012, both2017robust, AlmaniKumarWheeler2017} and \cite{TAKA}. Fixed stress splitting is preferred in practice because of its robustness and efficiency. In this work we focus on the extension of fixed stress split scheme to our nonlinear setting. Our main contribution is in the convergence analysis of the fixed stress splitting
scheme for the aforementioned nonlinear Biot model. The fixed stress splitting scheme introduces a stabilization of the flow equation and then iterates with the elasticity equation.  This stabilization amounts to adding a diagonal term to the pressure matrix block.  One key question is the amount of stabilization that should be added, which will be addressed in the proof of  convergence of the scheme. Moreover, our analysis uses solution variables in their natural energy norms in contrast to the approaches in \cite{MikelicWheeler2012, AlmaniKumarWheeler2017}. Though we treat here the fixed stress splitting scheme,  the idea of the proof extends to the other types as well with necessary alterations. 

The paper is organized as follows:  Section \ref{subsec2.1} introduces the nonlinear Biot equations. The fixed-stress splitting scheme is given in Section \ref{subsec2.2}. The convergence analysis is carried out in Section \ref{sec3} and the main result is stated in Section \ref{subsec3.2}. The numerical examples including convergence studies for different choices for $K(\div \bmu)$ are in Section \ref{sec4} followed by conclusions in Section \ref{sec5}.

\section{Problem formulation and splitting scheme}\label{sec2}

In this section, we present a nonlinear poroelasticity model that generalizes the classical quasi-static Biot's model of
consolidation by introducing a general (nonlinear) dependency of the hydraulic conductivity, or, equivalently, porosity
and/or permeability, on solid dilation, i.e., on the divergence of the displacement field.
We further extend a popular splitting scheme for this nonlinear model, namely, the fixed-stress split method that can
be used to solve the coupled problem iteratively by solving in each iterations step one mechanics and and one flow
equation separately.

\subsection{A nonlinear poroelasticity model}\label{subsec2.1}

We consider the following nonlinear poroelasticity problem in two-field formulation. Let $\Omega$ be a bounded Lipschitz
domain in $\mathbb{R}^{d}, d=2,3.$ Then the solid displacement $\bmu$ and fluid pressure $p$ are sought as the
solution of the differential-algebraic system
\begin{subequations}\label{eq:nlBiot}
\begin{align}
-\div \sig + \alpha \nabla p &= \bmf~~ \text{in}~~ \Omega\times (0,T),\label{nlBiot1}\\
\frac{\partial}{\partial t}\left( \alpha \div \bmu + S p\right) - \div (K(\div \bmu) \nabla p)  &=g\;\;\text{in}~~ \Omega\times (0,T),
\label{nlBiot2}
\end{align}
\end{subequations}
which couples one elliptic partial differential equation (PDE), stating the momentum balance, to one parabolic PDE,
stating the mass balance. Here $\sig=2 \mu \ep(\bmu) + \lambda \div \bmu I$ denotes the effective stress where
$\ep(\bmu) = \frac{1}{2} (\nabla \bmu + (\nabla \bmu)^T)$ is the strain tensor and $\lambda$ and $\mu$ are given Lam\'{e}
parameters. The remaining model parameters are the Biot-Willis coefficient $\alpha$, the constrained storage coefficient $S$,
and the hydraulic conductivity~$K$, which itself is defined in terms of the intrinsic permeability $\mathbb{K}$, the fluid density $\rho$
and viscosity $\eta$ (and the gravitational constant $c_g$) via the relation  $K=\mathbb{K} \rho c_g/\eta$.

Moreover, we want to make the following assumption.

\begin{assumption}\label{ass:K}
We assume that the hydraulic conductivity $K: \mathbb{R} \mapsto \mathbb{R}$ is differentiable, strictly positive, bounded
from above, and Lipschitz continuous with a Lipschitz constant $K_L$, i.e., the function~$K$ satisfies the conditions
\begin{subequations}\label{eq:ass}
\begin{align}
&K \in C^1(\mathbb{R}), \label{ass0}\\
&0 < K_0 \le K(z) \mbox{ for all } z \in \mathbb{R}, \label{ass1}\\
&K(z) \le K_1 < \infty \mbox{ for all } z \in \mathbb{R}, \label{ass2}\\
&\vert K(z_1) - K(z_2) \vert \le K_L \vert z_1 - z_2 \vert \mbox{ for all } z_1,z_2 \in \mathbb{R}. \label{ass3}
\end{align}
\end{subequations}
\end{assumption}

Then, under Assumption~\ref{ass:K} and imposing proper boundary and initial conditions, the system~\eqref{eq:nlBiot}
has a unique solution. For instance, such conditions are given by
%
\begin{eqnarray*}
p(\bmx,t) &=& p_{D}(\bmx,t),  \qquad \bmx \in \Gamma_{p,D}, \quad t > 0,\\ 
\frac{\partial p(\bmx,t)}{\partial \bmn} &=& q_{N}(\bmx,t),  \qquad \bmx \in \Gamma_{p,N}, \quad t > 0,\\ 
\bmu(\bmx,t) &=& {\bmu}_D(\bmx,t),  \qquad \bmx \in \Gamma_{\bmu,D}, \quad t > 0, \\  
({\sig(\bmx,t)}-\alpha p  \bm I) \, {\bmn} (\bmx) &=& {\bmv}_{N}(\bmx,t), \qquad \bmx \in \Gamma_{\bmu,N}, \quad t > 0, \\ 
p(\bmx,0) &=& p_{0}(\bmx), \qquad \bmx \in \Omega, \\ 
\bmu (\bmx,0) &=& \bmu_0(\bmx), \qquad \bmx \in \Omega,
\end{eqnarray*}
where 
$\Gamma_{p,D} \cap \Gamma_{p,N} = \emptyset$,
$\overline{\Gamma}_{p,D}\cup \overline{\Gamma}_{p,N}=\Gamma=\partial{\Omega}$,
$\Gamma_{\bmu,D} \cap \Gamma_{\bmu,N} = \emptyset$ and
$\overline{\Gamma}_{\bmu,D} \cup \overline{\Gamma}_{\bmu,N}=\Gamma$
are fulfilled. 

We divide~\eqref{nlBiot1} by $2 \mu$ and~\eqref{nlBiot2} by $\alpha$ and introduce the new variable
$\tilde p = \frac{\alpha}{(2\mu)} p$ and obtain the equivalent system
\begin{subequations}\label{eq:nlBiot_scaled}
\begin{align}
-\div (\ep(\bmu) + \tilde \lambda \div \bm u I) + \nabla \tilde p &= \tilde \bmf~~ \text{in}~~ \Omega\times (0,T),\label{nlBiot_scaled1}\\
\frac{\partial}{\partial t}\left( \div \bmu + \tilde S \tilde p\right) - \div (\tilde K (\div \bmu) \nabla \tilde p)  &= \tilde g\;\;\text{in}~~ \Omega\times (0,T),
\label{nlBiot_scaled2}
\end{align}
\end{subequations}
for the scaled parameters $\tilde \lambda = \lambda/(2 \mu)$, $\tilde S = \frac{2 \mu S}{\alpha^2}$, scaled right-hand sides
$\tilde \bmf = \bmf/(2 \mu)$, $\tilde g = g/\alpha$, and the scaled function $\tilde K  (\div \bmu) = \frac{2 \mu}{\alpha^2} K (\div \bmu)$,
the latter satisfying Assumption~\ref{ass:K} if and only if $K$ does.

Next, on a given subdivision $0=t_0 < t_1 < \ldots < t_{n-1} < t_n < \ldots < t_{N-1} < t_N = T$ of the time interval $[0,T]$ into subintervals $[t_{n-1},t_n]$
of length $\tau:=\tau_n=t_n-t_{n-1}$,  $n=1,2,\ldots,N$, we discretize in time equation~\eqref{nlBiot_scaled2} by the implicit Euler method. Since in what
follows we consider only a single time-step, for convenience, we denote the quantities of interest at time $t_n$ by $\bmu := \bmu_n = \bmu (\bmx,t_n)$ and
$p := \tilde p_n = p (\bmx,t_n)$, and use the short notation $\bmf := \tilde \bmf (\bmx,t_n)$ and $g=\tau \tilde g(\bmx,t_n) + \div \bmu_{n-1} + \tilde S p_{n-1}$
for the right-hand sides in the implicit Euler time-step equations, where we assume that $\bmu_{n-1}$ and $\tilde p_{n-1}$ are known from initial conditions
or have already been computed. Moreover, from now on, we will also skip the tilde symbol again with the scaled parameters in~\eqref{eq:nlBiot_scaled},
including the scaled  hydraulic conductivity.

The variational form of the time-step equations can then be expressed as:
Find $(\bmu,p) \in \bmU \times P$ such that
\begin{subequations}\label{eq:nlBiot_weak}
\begin{align}
(\ep(\bmu),\ep(\bmw)) + \lambda (\div \bmu, \div \bmw) - (p, \div w) &= (\bmf,\bmw) , \qquad \forall \bmw \in \bmU, \label{nlBiot_weak1}\\
(\div \bmu, q) - \tau  (\div K (\div \bmu) \nabla p, q) + S (p,q) &= (g, q), \qquad \forall q \in P,  \label{nlBiot_weak2}
\end{align}
\end{subequations}
where the choice of the spaces $\bmU$ and $P$ depends on the boundary conditions, e.g,, $\bmU = \bmH_0^1(\Omega)$ and $P=L_0^2(\Omega)$
for $\Gamma_{\bmu,D} =\Gamma_{p,N}=\Gamma$. System~\eqref{eq:nlBiot_weak} will be our object of interest in the remainder of this paper.

\begin{remark}
When the hydraulic conductivity $K$ is constant, the model equations become linear.
They are a coupled system of an elliptic equation for the displacement $\bmu$
and a parabolic equation for $p$. The linear problem has been intensively investigated
including the existence and uniqueness of a solution, and its numerical approximations. 
\end{remark}

\subsection{Fixed-stress splitting method}\label{subsec2.2}

The fixed-stress splitting method can be used to solve iteratively system~\eqref{eq:nlBiot_weak}, i.e. for approximating
its solution $(\bmu,p)=(\bmu_n,p_n)$ at time $t=t_n$ starting with an arbitrary initial guess $(\bmu^0,p^0)$ thereby
using an already known approximation of $(\bmu_{n-1},p_{n-1})$, which can be computed by the same method.
In order to avoid confusion with time-stepping, we will use superscripts for the iteration counters in the studied
splitting scheme.
Algorithm~\ref{alg1} below describes this iterative process that consists of computing alternately an update $p^{i+1}$ from a
stabilized mass-balance equation, assuming a constant volumetric mean stress $\lambda \div \bmu^i - L \lambda \alpha^{-1} p^i$,
where $L$ is a free to be chosen stabilization parameter, i.e., solving equation~\eqref{alg1_p} and an update $\bmu^{i+1}$ arising from solution of
the momentum balance equation~\eqref{alg1_u}. The process is continued until a certain convergence criterion is satisfied. 

\begin{algorithm}
\caption{Fixed-stress splitting method in variational form}\label{alg1}
Execute alternately steps (a) and (b) until the convergence criterion is satisfied. \\
(a) Given $\bmu^i$ and $p^i$ and $K^i:=K(\div \bmu^i)$, solve for $p^{i+1}$ the equation
\begin{equation}\label{alg1_p}
\hspace{-1ex} -\tau (\div K^i \nabla p^{i+1},q) + S (p^{i+1},q) + L (p^{i+1},q) = (g,q) + L (p^i,q) - (\div \bmu^i,q), \ \forall q \in P.
\end{equation}
(b) Given $p^{i+1}$, solve for $\bmu^{i+1}$ the equation
\begin{equation}\label{alg1_u}
(\ep (\bmu^{i+1}),\ep (\bmw)) + \lambda (\div \bmu^{i+1},\div \bmw)  = (\bmf,\bmw) + (p^{i+1},\div \bmw), \ \forall \bmw \in \bmU.
\end{equation}
\end{algorithm}

Note that the above algorithm is essentially the classical fixed stress method in which we use the hydraulic conductivity $K^i$ from
the previous iteration in equation~\eqref{alg1_p}.

\begin{remark}
    In the linear case (constant $K$), the iterative splitting schemes have been extensively studied for this multiphysics problem.
    We refer to \cite{MikelicWheeler2013} for a first convergence analysis, then to \cite{Bothetall2017} for an extension to
    heterogeneous flow and by using energy norms. The question of an optimal stabilization parameter $L$ (i.e. the value of $L$
    which leads to the minimal number of iterations) was discussed in detail in~\cite{storvik2019optimization,hong2020parameter},
    where also the connection between the stability of the discretization and the convergence of the splitting scheme has been made. 
    \end{remark}
    
 \begin{remark}
   The fixed-stress splitting scheme presented above can be seen as a combination between a linearization scheme, the $L$-scheme (see \cite{List2016}) and the fixed-stress splitting scheme for linear problems. This was already previously applied for other classes of nonlinear Biot models, as e.g. in \cite{borregales2018robust, borregales2021iterative} or for unsaturated flow in deformable porous media \cite{both2019anderson}. This type of methods are often accelerated by using the Anderson Acceleration method, see e.g. \cite{both2019anderson}.
   \end{remark}
    

\section{Convergence analysis}\label{sec3}

In this section we present the convergence analysis of Algorithm~\ref{alg1} for the iterative solution of the nonlinear poroelasticity
problem~\eqref{eq:nlBiot_weak}.

\subsection{Auxiliary results}\label{subsec3.1}

In order to analyze the fixed-stress split method, i.e., prove its uniform linear convergence for an arbitrary initial guess $(\bmu^0,p^0)$,
we define the errors associated with the approximations $\bmu^i$ and $p^i$ after $i$ executions of steps (a) and (b) of Algorithm~\ref{alg1},
i.e.,
\begin{subequations}\label{eq:errors}
\begin{align}
\euo &:= \bmu^i - \bmu \in \bmU, \label{error_u}\\
\epo &:= p^i - p \in P,  \label{error_p}
\end{align}
\end{subequations}
where $i=0,1,2,\ldots$, and $(\bmu,p)$ denotes the exact solution of the coupled problem~\eqref{eq:nlBiot_weak}.

Subtracting from equations \eqref{alg1_p} and \eqref{alg1_u}, which are solved in the $(i+1)$-st iteration, the
equations~\eqref{nlBiot_weak2} and~\eqref{nlBiot_weak1} results in the error equations
\begin{subequations}\label{eq:error_equations}
\begin{align}
- \tau  (\div K^i \nabla p^{i+1}-\div K \nabla p, q) + S (\epn,q) & \nonumber \\ 
+ L (\epn,q)&= L (\epo,q) -(\div \euo,q), \ \forall q \in P,  \label{error_equations1}\\
(\ep(\eun),\ep(\bmw)) + \lambda (\div \eun, \div \bmw) &= (\epn,\div \bmw), \ \forall \bmw \in \bmU, \label{error_equations2}
\end{align}
\end{subequations}
where we have used the short notation $K:=K(\div \bmu)$. Now, using the test functions $q=\epn$ and $\bmw:=\eun$
in~\eqref{eq:error_equations} and adding up the resulting equations \eqref{error_equations1} and \eqref{error_equations2},
we obtain an error identity, which serves as the basis for our analysis, that is,
\begin{align}\label{eq:error_identity}
\Vert \ep(\eun) \Vert^2 &+ \lambda \Vert \div \eun \Vert^2 + \tau (K^i \nabla p^{i+1} - K \nabla p, \nabla \epn)  \nonumber \\
&+ S \Vert \epn \Vert^2 + L ((\epn-\epo),\epn) = (\div (\eun-\euo),\epn).
\end{align}
The following lemma provides an estimate that can be derived from~\eqref{eq:error_identity}, bounding quantities associated
with the approximations after $(i+1)$ iterations by the norm of the error~$\epo$.

\begin{lemma}\label{lemma1}
Consider the approximations $\bmu^{i+1}$ and $p^{i+1}$ generated via Algorithm~\ref{alg1} and the corresponding errors $\eun$
and $\epn$ defined according to~\eqref{eq:errors}, where $(\bmu,p)$ is the exact solutions of problem~\eqref{eq:nlBiot_weak}.
Then for a stabilization parameter $L \ge 1/(d^{-1}+\lambda)=:1/(c_K^2+\lambda)$ there holds the estimate
\begin{align}\label{basic_estimate}
\frac{1}{2}\left(\Vert \ep(\eun) \Vert^2 + \lambda \Vert {\rm div} \, \eun \Vert^2 \right)
&+\tau  (K^i \nabla p^{i+1} - K \nabla p, \nabla \epn)  \nonumber \\
&+ S \Vert \epn \Vert^2 + \frac{L}{2} \Vert \epn \Vert^2 \le  \frac{L}{2} \Vert \epo \Vert^2.
\end{align}
\end{lemma}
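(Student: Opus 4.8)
The plan is to convert the error identity \eqref{eq:error_identity} into the claimed estimate \eqref{basic_estimate} in two moves: first rewrite the stabilization term $L((\epn-\epo),\epn)$ by a polarization identity, and then recognize that the coupling term $(\div(\eun-\euo),\epn)$ on the right-hand side can be split, via the momentum error equation, into a part that is absorbed by one half of the mechanics energy and a harmless increment whose size is controlled exactly by the choice of $L$.

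First I would treat the stabilization term. Using the elementary identity $(a-b,a)=\tfrac12\Vert a\Vert^2-\tfrac12\Vert b\Vert^2+\tfrac12\Vert a-b\Vert^2$ with $a=\epn$ and $b=\epo$, the term $L((\epn-\epo),\epn)$ equals $\tfrac{L}{2}\Vert\epn\Vert^2-\tfrac{L}{2}\Vert\epo\Vert^2+\tfrac{L}{2}\Vert\epn-\epo\Vert^2$. Moving the negative contribution to the right-hand side, identity \eqref{eq:error_identity} acquires the full mechanics energy $\Vert\ep(\eun)\Vert^2+\lambda\Vert\div\eun\Vert^2$, the term $\tfrac{L}{2}\Vert\epn\Vert^2$ and the extra nonnegative term $\tfrac{L}{2}\Vert\epn-\epo\Vert^2$ on the left, while its right-hand side is $\tfrac{L}{2}\Vert\epo\Vert^2+(\div(\eun-\euo),\epn)$. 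Comparing with the target \eqref{basic_estimate}, which only requests $\tfrac12\Vert\ep(\eun)\Vert^2+\tfrac{\lambda}{2}\Vert\div\eun\Vert^2$, it remains to show that the coupling term satisfies $(\div(\eun-\euo),\epn)\le\tfrac12\Vert\ep(\eun)\Vert^2+\tfrac{\lambda}{2}\Vert\div\eun\Vert^2+\tfrac{L}{2}\Vert\epn-\epo\Vert^2$; the freed-up half of the mechanics energy together with $\tfrac{L}{2}\Vert\epn-\epo\Vert^2$ then absorbs it.

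The key step is to rewrite the coupling term through the momentum error equation \eqref{error_equations2}. Testing it with $\bmw=\eun-\euo$ gives $(\div(\eun-\euo),\epn)=(\ep(\eun),\ep(\eun-\euo))+\lambda(\div\eun,\div(\eun-\euo))$. Applying Cauchy--Schwarz and Young's inequality factorwise separates the $\eun$-part, which reproduces precisely $\tfrac12\Vert\ep(\eun)\Vert^2+\tfrac{\lambda}{2}\Vert\div\eun\Vert^2$, from the increment part $\tfrac12\Vert\ep(\eun-\euo)\Vert^2+\tfrac{\lambda}{2}\Vert\div(\eun-\euo)\Vert^2$. Hence it suffices to bound the increment energy $\Vert\ep(\eun-\euo)\Vert^2+\lambda\Vert\div(\eun-\euo)\Vert^2$ by $L\Vert\epn-\epo\Vert^2$.

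The hard part is this last bound, and it is where the condition $L\ge 1/(c_K^2+\lambda)$ and the Lam\'e-robustness enter. Subtracting the momentum error equations at iterations $i+1$ and $i$ yields $(\ep(\eun-\euo),\ep(\bmw))+\lambda(\div(\eun-\euo),\div\bmw)=(\epn-\epo,\div\bmw)$, and testing with $\bmw=\eun-\euo$ gives $\Vert\ep(\eun-\euo)\Vert^2+\lambda\Vert\div(\eun-\euo)\Vert^2=(\epn-\epo,\div(\eun-\euo))$. Invoking the pointwise trace inequality $(\div\bmv)^2\le d\,\vert\ep(\bmv)\vert^2$, equivalently $c_K^2\Vert\div\bmv\Vert^2\le\Vert\ep(\bmv)\Vert^2$ with $c_K^2=d^{-1}$, I can bound $(c_K^2+\lambda)\Vert\div(\eun-\euo)\Vert^2$ from above by the left-hand side and hence by $\Vert\epn-\epo\Vert\,\Vert\div(\eun-\euo)\Vert$, so that $\Vert\div(\eun-\euo)\Vert\le(c_K^2+\lambda)^{-1}\Vert\epn-\epo\Vert$. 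Substituting back gives $\Vert\ep(\eun-\euo)\Vert^2+\lambda\Vert\div(\eun-\euo)\Vert^2\le(c_K^2+\lambda)^{-1}\Vert\epn-\epo\Vert^2$, whence the choice $L\ge 1/(c_K^2+\lambda)$ closes the chain. I expect the only subtlety to be bookkeeping of which half of the energy goes where; note that $\lambda$ sits inside the denominator $(c_K^2+\lambda)^{-1}$, so the bound in fact improves as $\lambda$ grows, which is exactly the mechanism behind the parameter-robustness of the estimate.
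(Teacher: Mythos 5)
Your proof is correct and follows essentially the same route as the paper's: the polarization identity for the stabilization term, testing the momentum error equation with $\bmw=\eun-\euo$ followed by Cauchy--Schwarz and Young, and bounding the increment energy via the difference of consecutive momentum error equations together with $\Vert \ep(\bmw)\Vert \ge c_K \Vert \div \bmw\Vert$, which is exactly where $L \ge 1/(c_K^2+\lambda)$ is used. The only difference is organizational (you isolate the coupling term as a single target inequality rather than collecting terms as the paper does), which does not change the substance of the argument.
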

\begin{proof}
In view of
$
(\epn-\epo,\epn)=\frac{1}{2} \left( \Vert \epn - \epo \Vert^2 + \Vert \epn \Vert^2 - \Vert \epo \Vert^2 \right)
$
we can rewrite \eqref{eq:error_identity} in the form
\begin{align}\label{eq:error_identity_r}
\Vert \ep(\eun) \Vert^2 &+ \lambda \Vert \div \eun \Vert^2 + \tau (K^i \nabla p^{i+1} - K \nabla p, \nabla \epn)  \nonumber \\
&+ S \Vert \epn \Vert^2 + \frac{L}{2} \Vert \epn \Vert^2 + \frac{L}{2} \Vert \epn - \epo \Vert^2 \nonumber \\
&= \frac{L}{2} \Vert \epo \Vert^2 +(\div (\eun - \euo), \epn).
\end{align}
Next, for $\bmw=\eun-\euo$, the error equation~\eqref{error_equations2} gives
\begin{align}\label{eq:error_identity_n}
(\div (\eun - \euo), \epn) = (\ep (\eun),\ep (\eun-\euo)) + \lambda (\div \eun, \div (\eun - \euo)).
\end{align}
Now, inserting \eqref{eq:error_identity_n} in \eqref{eq:error_identity_r}, we obtain
\begin{align}\label{eq:estimate_i}
\Vert \ep(\eun) \Vert^2 &+ \lambda \Vert \div \eun \Vert^2 + \tau (K^i \nabla p^{i+1} - K \nabla p, \nabla \epn)  \nonumber \\
&+ S \Vert \epn \Vert^2 + \frac{L}{2} \Vert \epn \Vert^2 + \frac{L}{2} \Vert \epn - \epo \Vert^2 \nonumber \\
&= \frac{L}{2} \Vert \epo \Vert^2 + (\ep (\eun),\ep (\eun-\euo)) + \lambda (\div \eun, \div (\eun - \euo)) \nonumber \\
& \le \frac{L}{2} \Vert \epo \Vert^2 + \frac{1}{2} \left( \Vert \ep(\eun) \Vert^2 + \lambda \Vert \div \eun \Vert^2 \right) \nonumber \\
&+ \frac{1}{2} \left( \Vert \ep(\eun - \euo) \Vert^2 + \lambda \Vert \div (\eun - \euo) \Vert^2 \right),
\end{align}
and, after collecting terms,
\begin{align}\label{eq:estimate_ii}
\frac{1}{2} \left( \Vert \ep(\eun) \Vert^2 + \lambda \Vert \div \eun \Vert^2 \right) &+ \tau (K^i \nabla p^{i+1} - K \nabla p, \nabla \epn)  \nonumber \\
&+ S \Vert \epn \Vert^2 + \frac{L}{2} \Vert \epn \Vert^2 + \frac{L}{2} \Vert \epn - \epo \Vert^2 \nonumber \\
&\le \frac{L}{2} \Vert \epo \Vert^2 \nonumber \\
&+\frac{1}{2} \left( \Vert \ep(\eun - \euo) \Vert^2 + \lambda \Vert \div (\eun - \euo) \Vert^2 \right).
\end{align}
What remains to be done is to estimate the last term on the right-hand side of~\eqref{eq:estimate_ii}. For this purpose, we subtract the
error equation~\eqref{error_equations2} for the $i$-th error $\euo$ from that for the $(i+1)$-st error $\eun$ and test with $\bmw=\eun-\euo$,
resulting in
\begin{align}\label{eq:estimate_iii}
\Vert \ep(\eun - \euo) \Vert^2 + \lambda \Vert \div (\eun - \euo) \Vert^2 & = (\epn - \epo,\div (\eun - \euo)) \nonumber \\
& \le \Vert \epn - \epo \Vert \Vert \div (\eun - \euo) \Vert .
\end{align}
Moreover, using the inequality $\Vert \ep(\bmw) \Vert \ge c_K \Vert \div \bmw \Vert$, which is valid for all $\bmw \in \bmU$
for $c_K = \frac{1}{\sqrt{d}}$ where $d$ is the space dimension, from~\eqref{eq:estimate_iii} we get
$$
(c_K^2+\lambda) \Vert \div (\eun - \euo) \Vert^2 \le \Vert \epn - \epo \Vert \Vert \div (\eun - \euo) \Vert,
$$
or, equivalently,
\begin{align}\label{eq:estimate_iiii}
\Vert \div (\eun - \euo) \Vert \le \frac{1}{c_K^2+\lambda} \Vert \epn - \epo \Vert .
\end{align}
Now, combining~\eqref{eq:estimate_iii} and~\eqref{eq:estimate_iiii},
we find
\begin{align}\label{eq:estimate_aux}
\Vert \ep(\eun - \euo) \Vert^2 + \lambda \Vert \div (\eun - \euo) \Vert^2 &\le \frac{1}{c_K^2+\lambda} \Vert \epn - \epo \Vert^2 \nonumber \\
&\le L \Vert \epn - \epo \Vert^2,
\end{align}
where the last inequality holds due to the assumption $L \ge 1/(c_K^2+\lambda)$.
Finally, using~\eqref{eq:estimate_aux} in~\eqref{eq:estimate_ii} yields~\eqref{basic_estimate},
the estimate we had to prove.
\end{proof}

Before we prove the main result of this paper, we present another lemma that will turn out to be useful. 

\begin{lemma}\label{lemma2}
For the errors associated with the iterates generated by Algorithm~\ref{alg1} there holds the estimate
\begin{align}\label{infsup_estimate}
(\beta_s^{-2}+\lambda)^{-1} \Vert \epn \Vert^2 \le \Vert \ep(\eun) \Vert^2 + \lambda \Vert {\rm div} \, \eun \Vert^2
\end{align}
where $\beta_s$ is the constant in the Stokes inf-sup condition
\begin{align}\label{eq:inf_sup_s}
\inf_{q \in P}
\sup_{\bmw \in \bmU}
\frac{({\rm div} \, \bmw,q)}{\Vert \bmu \Vert_1\Vert q \Vert} \geq \beta_s  .
\end{align}
\end{lemma}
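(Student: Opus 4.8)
The plan is to exploit the inf-sup condition \eqref{eq:inf_sup_s} to manufacture a displacement test function whose divergence is exactly the pressure error $\epn$, and then to substitute it into the momentum error equation \eqref{error_equations2}. The first step is to recall that \eqref{eq:inf_sup_s} is equivalent to the surjectivity of the divergence onto $P$ with a bounded right inverse: for the given $\epn \in P$ there exists $\bmw_0 \in \bmU$ with $\div \bmw_0 = \epn$ and $\Vert \bmw_0 \Vert_1 \le \beta_s^{-1} \Vert \epn \Vert$. This is the standard consequence of the inf-sup condition via the closed-range theorem (saddle-point theory), and it is the only genuinely functional-analytic input; everything that follows is elementary. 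It is on this step that one must take some care, so it is the main obstacle: one has to justify the bounded right inverse with operator norm $\beta_s^{-1}$ and ensure that $\epn$ indeed lies in the space $P$ on which \eqref{eq:inf_sup_s} is posed (e.g.\ $P = L_0^2(\Omega)$).

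With $\bmw_0$ in hand, I would test \eqref{error_equations2} with $\bmw = \bmw_0$. Since $\div \bmw_0 = \epn$, the right-hand side becomes $(\epn,\div \bmw_0) = \Vert \epn \Vert^2$, so that
\[
\Vert \epn \Vert^2 = (\ep(\eun),\ep(\bmw_0)) + \lambda (\div \eun, \div \bmw_0) = (\ep(\eun),\ep(\bmw_0)) + \lambda (\div \eun, \epn).
\]
The two terms are then estimated \emph{asymmetrically}, which is precisely what produces the sharp constant. For the strain term I would use that the symmetrization is an orthogonal projection in the Frobenius inner product, hence $\Vert \ep(\bmw_0) \Vert \le \Vert \nabla \bmw_0 \Vert \le \Vert \bmw_0 \Vert_1 \le \beta_s^{-1} \Vert \epn \Vert$, giving $(\ep(\eun),\ep(\bmw_0)) \le \beta_s^{-1}\Vert \ep(\eun) \Vert\, \Vert \epn \Vert$. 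For the volumetric term I would simply invoke Cauchy--Schwarz together with the identity $\div \bmw_0 = \epn$ to obtain $\lambda(\div \eun,\epn) \le \lambda \Vert \div \eun \Vert\, \Vert \epn \Vert$; crucially, here no factor $\beta_s^{-1}$ appears, which is exactly why the final constant is $\beta_s^{-2}+\lambda$ and not $\beta_s^{-2}(1+\lambda)$, as the cruder supremizer bound $\Vert \div \bmw_0\Vert \le \Vert \bmw_0\Vert_1$ would yield.

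Dividing through by $\Vert \epn \Vert$ (the case $\epn = 0$ being trivial) leaves
\[
\Vert \epn \Vert \le \beta_s^{-1} \Vert \ep(\eun) \Vert + \lambda \Vert \div \eun \Vert = \beta_s^{-1}\cdot \Vert \ep(\eun) \Vert + \sqrt{\lambda}\cdot \sqrt{\lambda}\,\Vert \div \eun \Vert.
\]
The concluding step is a two-term Cauchy--Schwarz, viewing the right-hand side as the inner product of $(\beta_s^{-1},\sqrt{\lambda})$ with $(\Vert \ep(\eun)\Vert, \sqrt{\lambda}\,\Vert \div \eun\Vert)$, which gives $\Vert \epn \Vert \le (\beta_s^{-2}+\lambda)^{1/2}\big(\Vert \ep(\eun)\Vert^2 + \lambda \Vert \div \eun\Vert^2\big)^{1/2}$. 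Squaring and rearranging then produces exactly the asserted estimate \eqref{infsup_estimate}.
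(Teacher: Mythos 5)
Your proof is correct and takes essentially the same route as the paper: both use the inf-sup condition to produce a test function $\bmw$ with $\div \bmw = \epn$ and $\Vert \ep(\bmw)\Vert \le \beta_s^{-1}\Vert \epn \Vert$, insert it into the error equation \eqref{error_equations2}, and conclude by Cauchy--Schwarz. The only cosmetic difference is that the paper applies Cauchy--Schwarz once in the weighted energy inner product $(\ep(\cdot),\ep(\cdot))+\lambda(\div \cdot,\div \cdot)$, whereas you estimate the two terms separately and recombine with a discrete Cauchy--Schwarz; both yield the identical constant $(\beta_s^{-2}+\lambda)$.
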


\begin{proof}
As one can easily see, the Stokes inf-sup condition~\eqref{eq:inf_sup_s} implies that for any $\epn$
there exists $\bmw_p \in \bmU$ such that
\begin{align}\label{infsup_equiv}
\div \bmw_p = \epn \quad \mbox{and} \quad \Vert \ep(\bmw_p) \Vert \le \beta_s^{-1} \Vert \epn \Vert.
\end{align}
Hence, this element $\bmw_p$ satifies the estimate
$$
 \Vert \ep(\bmw_p) \Vert^2 + \lambda \Vert \div \bmw_p \Vert^2 \le (\beta_s^{-2}+\lambda) \Vert \epn \Vert^2.
$$
Setting $\bmw=\bmw_p$ in~\eqref{error_equations2} and using \eqref{infsup_equiv}, we obtain
\begin{eqnarray*}
\Vert \epn \Vert^2 &=& (\ep(\eun),\ep(\bmw_p)) + \lambda (\div \eun, \div \bmw_p) \\
& \le & (\Vert \ep(\eun) \Vert^2 + \lambda \Vert \div \eun \Vert^2)^\frac{1}{2} (\Vert \ep(\bmw_p) \Vert^2 + \lambda \Vert \div \bmw_p \Vert^2)^\frac{1}{2} \\
& \le &  (\Vert \ep(\eun)\Vert^2 + \lambda \Vert \div \eun \Vert^2)^\frac{1}{2} (\beta_s^{-2}+\lambda)^\frac{1}{2} \Vert \epn \Vert
\end{eqnarray*}
which shows the desired result.
\end{proof}

\begin{remark}
    The above method of contraction uses solution variables and their natural energy norms.
    This is in the same spirit as the proofs of contraction as carried out
    in~\cite{both2017robust, storvik2019optimization,hong2020parameter,Hong_parameter-robust_2020}.
    This is in contrast to the convergence analysis as carried out in \cite{MikelicWheeler2012, TAKA, AlmaniKumarWheeler2017}
    where the contraction is proved for composition quantities consisting of both pressure and displacement terms.
    For example, in case of fixed stress split scheme, a volumetric mean stress is defined by
    $\sigma = {\lambda} \nabla \cdot \bmu  - {\alpha} p$  and the contraction is proved on this composite
    quantity \cite{MikelicWheeler2012}.  
\end{remark}

\subsection{Main result}\label{subsec3.2}

Lemma~\ref{lemma1} and Lemma~\ref{lemma2} are the key ingredients to establish the uniform convergence of the
fixed-stress split method under Assumption~\ref{ass:K}.

\begin{theorem}\label{theorem1}
Consider the fixed-stress split iteration according to Algorithm~\ref{alg1} to approximate the
exact solution $(\bmu,p)$ of the nonlinear problem~\eqref{eq:nlBiot_weak} in
which the hydraulic conductivity $K=K({\rm div} \, \bmu)$ is a function of the dilation of the solid
that satisfies the conditions in Assumption~\ref{ass:K}.
Further, assume that $\nabla p \in L^\infty(\Omega)$.

Then the iterates $(\bmu^i,p^i)$ converge linearly to $(\bmu,p)$ for a sufficiently small time
step~$\tau$. In particular, the following estimates holds:
\begin{eqnarray}
\Vert \epn \Vert &\le& \sqrt{
\frac{c_0 + \frac{\tau}{4 K_0}   \frac{c^2}{(c_K^2+\lambda)^2}    }{c_1} 
}\Vert \epo \Vert ,  \label{error_estimate_p} \\
\left( \Vert \ep(\eun)\Vert^2 + \lambda \Vert  {\rm div} \, \eun \Vert^2 \right)^\frac{1}{2} &\le& 
\sqrt{\frac{1}{c_K^2+\lambda}} \Vert \epn \Vert , \label{error_estimate_u}
\end{eqnarray}
where $c_0 := L/2 < c_1 := L/2 + (\beta_s^{-2}+\lambda)^{-1}/2$ and $L$ is the stabilization
parameter in Algorithm~\ref{alg1} and $\beta_s$ the Stokes inf-sup constant.
\end{theorem}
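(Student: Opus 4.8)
The plan is to start from the basic estimate~\eqref{basic_estimate} of Lemma~\ref{lemma1}, lower-bound its left-hand side by a multiple of $\Vert \epn \Vert^2$, and push the remaining error contributions onto the right-hand side expressed through $\Vert \epo \Vert$. Everything except the nonlinear flux term $\tau (K^i \nabla p^{i+1} - K \nabla p, \nabla \epn)$ is a recombination of the two lemmas; that term, which mixes both unknowns together with the nonlinearity of $K$, is the one new ingredient and the main obstacle.

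To treat the flux term I would first split it additively as $K^i \nabla p^{i+1} - K \nabla p = K^i \nabla \epn + (K^i - K)\nabla p$, so that $\tau (K^i \nabla p^{i+1} - K \nabla p, \nabla \epn) = \tau (K^i \nabla \epn, \nabla \epn) + \tau ((K^i - K)\nabla p, \nabla \epn)$. The first summand is the coercive part: by the lower bound~\eqref{ass1} it satisfies $\tau (K^i \nabla \epn, \nabla \epn) \ge \tau K_0 \Vert \nabla \epn \Vert^2 \ge 0$. For the second I would use the Lipschitz property~\eqref{ass3} pointwise, $\vert K^i - K \vert = \vert K(\div \bmu^i) - K(\div \bmu)\vert \le K_L \vert \div \euo \vert$, together with $\nabla p \in L^\infty(\Omega)$ and Cauchy–Schwarz to get $\vert \tau ((K^i - K)\nabla p, \nabla \epn)\vert \le \tau c \, \Vert \div \euo \Vert \, \Vert \nabla \epn \Vert$ with $c := K_L \Vert \nabla p \Vert_{L^\infty(\Omega)}$. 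A weighted Young inequality with the weight tuned so that the factor $\Vert \nabla \epn \Vert$ is absorbed exactly against $\tau K_0 \Vert \nabla \epn \Vert^2$ then leaves only $\frac{\tau c^2}{4 K_0}\Vert \div \euo \Vert^2$, giving the lower bound $\tau (K^i \nabla p^{i+1} - K \nabla p, \nabla \epn) \ge -\frac{\tau c^2}{4 K_0}\Vert \div \euo \Vert^2$. Hence the flux term can be dropped from~\eqref{basic_estimate} at the price of adding $\frac{\tau c^2}{4 K_0}\Vert \div \euo \Vert^2$ to the right-hand side.

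The next step is to convert $\Vert \div \euo \Vert^2$ back into $\Vert \epo \Vert^2$. Testing the displacement error equation~\eqref{error_equations2}, written at iteration $i$, with $\bmw=\euo$ and using $\Vert \ep(\euo)\Vert \ge c_K \Vert \div \euo \Vert$ yields $(c_K^2+\lambda)\Vert \div \euo \Vert^2 \le \Vert \epo \Vert \, \Vert \div \euo \Vert$, whence $\Vert \div \euo \Vert^2 \le (c_K^2+\lambda)^{-2}\Vert \epo \Vert^2$; the identical computation at iteration $i+1$ produces~\eqref{error_estimate_u}. Substituting this bound, the surviving inequality reads $\frac{1}{2}(\Vert \ep(\eun)\Vert^2 + \lambda \Vert \div \eun \Vert^2) + S \Vert \epn \Vert^2 + \frac{L}{2}\Vert \epn \Vert^2 \le \bigl(c_0 + \frac{\tau}{4K_0}\frac{c^2}{(c_K^2+\lambda)^2}\bigr)\Vert \epo \Vert^2$ with $c_0=L/2$.

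Finally I would lower-bound the left-hand side by $c_1 \Vert \epn \Vert^2$: the term $S\Vert \epn \Vert^2$ is nonnegative and is discarded, while Lemma~\ref{lemma2} turns $\frac{1}{2}(\Vert \ep(\eun)\Vert^2 + \lambda \Vert \div \eun \Vert^2)$ into $\frac{1}{2}(\beta_s^{-2}+\lambda)^{-1}\Vert \epn \Vert^2$, which combined with $\frac{L}{2}\Vert \epn \Vert^2$ recovers exactly $c_1 = L/2 + (\beta_s^{-2}+\lambda)^{-1}/2$. Dividing by $c_1$ and taking square roots gives~\eqref{error_estimate_p}, and~\eqref{error_estimate_u} was already established. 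Linear convergence follows since $c_0<c_1$ forces the contraction factor strictly below one whenever $\tau$ is small enough that $\frac{\tau}{4K_0}\frac{c^2}{(c_K^2+\lambda)^2} < \frac{1}{2}(\beta_s^{-2}+\lambda)^{-1}$. I expect the Young-inequality balancing in the flux estimate to be the only delicate point; the remainder is bookkeeping with Lemma~\ref{lemma1} and Lemma~\ref{lemma2}.
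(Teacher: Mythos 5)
Your proposal is correct and follows essentially the same route as the paper's proof: the same splitting $K^i\nabla p^{i+1}-K\nabla p = K^i\nabla \epn + (K^i-K)\nabla p$, the same Lipschitz/$L^\infty$ bound with $c=K_L\,c_\infty$, the same Young-inequality absorption against $\tau K_0\Vert\nabla \epn\Vert^2$ (your direct $\frac{\tau c^2}{4K_0}$ form is equivalent to the paper's explicit choice $\delta = 2K_0(c_K^2+\lambda)/c$), and the same use of the displacement error equation plus $\Vert\ep(\bmw)\Vert\ge c_K\Vert\div\bmw\Vert$ and Lemma~\ref{lemma2} to close the estimate. The only difference is the bookkeeping order (you apply Lemma~\ref{lemma2} at the end rather than combining it with Lemma~\ref{lemma1} at the start), which changes nothing in substance.
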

\begin{proof}
The inner product $\tau  (K^i \nabla p^{i+1} - K \nabla p, \nabla \epn)$ in~\eqref{basic_estimate}
can be represented as
\begin{align}\label{splitting}
\tau  (K(\div \bmu^i) \nabla p^{i+1} - K (\div \bmu) \nabla p, \nabla \epn) &=
\tau  ( (K(\div \bmu^i) - K (\div \bmu)) \nabla p, \nabla \epn) \nonumber \\
&+ \tau  (K (\div \bmu^i) \nabla \epn, \nabla \epn)
\end{align}
The two terms in this splitting, due to Assumption~\ref{ass:K}, can be estimated as follows:
\begin{eqnarray}
\tau  ( (K(\div \bmu^i) - K (\div \bmu)) \nabla p, \nabla \epn) &\le& \tau c \Vert \div \euo \Vert \Vert \nabla \epn \Vert \nonumber \\
&\le& \tau c \left( \frac{1}{2 \delta} \Vert \div \euo \Vert^2 + \frac{\delta}{2} \Vert \nabla \epn \Vert^2\right) \label{eq:k_estimate1} \\
\tau  ( (K(\div \bmu^i) \nabla \epn, \nabla \epn) &\ge& \tau K_0 \Vert \nabla \epn \Vert^2 \label{eq:k_estimate2}
\end{eqnarray}
where in the first inequality in~\eqref{eq:k_estimate1} we have used the boundedness
of $\nabla p$, i.e., $\Vert \nabla p \Vert_{L^{\infty}} \le c_\infty$ as well as the
Lipschitz continuity of $K$, i.e.,
$$
\vert K(\div \bmu^i) - K (\div \bmu) \vert \le K_L \vert \div \bmu^i - \div \bmu \vert = K_L \vert \div \euo \vert,
$$
which means that we can choose $c := c_\infty K_L$ and the positive constant $\delta$ in the second inequality
in~\eqref{eq:k_estimate1} is still at our disposal. To obtain~\eqref{eq:k_estimate2} we have used the strict positivity
of $K$, see~\eqref{ass1}.

Combining~\eqref{basic_estimate} and~\eqref{infsup_estimate}, we obtain
\begin{align*}
\frac{1}{2}(\beta_s^{-2}+\lambda)^{-1} \Vert \epn \Vert^2 + \tau  (K^i \nabla p^{i+1} - K \nabla p, \nabla \epn)
+ S \Vert \epn \Vert^2 + \frac{L}{2} \Vert \epn \Vert^2 \le  \frac{L}{2} \Vert \epo \Vert^2
\end{align*}
which implies
\begin{align}\label{est1}
\frac{1}{2}\left( \frac{1}{\beta_s^{-2}+\lambda}+L\right) \Vert \epn \Vert^2 + \tau  (K^i \nabla p^{i+1} - K \nabla p, \nabla \epn)
\le  \frac{L}{2} \Vert \epo \Vert^2.
\end{align}
We rewrite the latter estimate as
\begin{align}\label{est2}
c_1 \Vert \epn \Vert^2 + \tau  (K^i \nabla p^{i+1} - K \nabla p, \nabla \epn)
\le  c_0 \Vert \epo \Vert^2.
\end{align}
using the definitions $0 < c_0 := L/2 < c_1 := L/2 + (\beta_s^{-2}+\lambda)^{-1}/2$.
Taking advantage of the splitting~\eqref{splitting} and the estimates~\eqref{eq:k_estimate1} and~\eqref{eq:k_estimate2},
from~\eqref{est2} we find
\begin{align}\label{est3}
c_1 \Vert \epn \Vert^2 + \tau  K_0 \Vert \nabla \epn\Vert^2 &\le c_0  \Vert \epo \Vert^2 + \tau c \Vert \div \euo \Vert \Vert \nabla \epn \Vert \nonumber \\
&\le c_0  \Vert \epo \Vert^2 + \tau c \frac{1}{c_K^2+\lambda} \Vert \epo \Vert \Vert \nabla \epn \Vert \nonumber \\
&\le c_0  \Vert \epo \Vert^2 + \tau c \frac{1}{c_K^2+\lambda} \left( \frac{1}{2 \delta} \Vert \epo \Vert^2 + \frac{\delta}{2} \Vert \nabla \epn \Vert^2 \right) .
\end{align}
where we have also used the error equation~\eqref{error_equations2} in a similar way as in the derivation of~\eqref{eq:estimate_iiii}.

Now, we choose $\delta := \frac{2 K_0 (c_K^2 + \lambda)}{c}$, to cancel the terms with $\Vert \nabla \epn \Vert^2$ on boths sides
of~\eqref{est3} in order to obtain
\begin{align}\label{est4}
c_1 \Vert \epn \Vert^2 &\le \left( c_0  +\frac{\tau}{4 K_0} \frac{c^2}{(c_K^2+\lambda)^2} \right) \Vert \epo \Vert^2,
\end{align}
the latter being equivalent to~\eqref{error_estimate_p}.

Finally, estimate~\eqref{error_estimate_u} follows in a similar way as~\eqref{eq:estimate_aux}
(from the error equation~\eqref{error_equations2} by setting $\bmw=\eun$ and using
$\Vert \ep(\bmw) \Vert \ge c_K \Vert \div \bmw \Vert$).
\end{proof}

\begin{remark}\label{remark_L}
A simple choice for the stabilization parameter is $L:=1/(c_K^2+\lambda)=1/(d^{-1}+\lambda)$, which together with
the definitions in the Theorem~\ref{theorem1}, i.e., $c_0 := L/2$ and  $c_1 := L/2 + (\beta_s^{-2}+\lambda)^{-1}/2$,
results in a quotient $c_0/c_1=1/(1+\frac{1+2 \lambda}{2 \beta_s^{-2} +2 \lambda})$.
This means that for small values of $\tau$ the theoretical bound on the $L^2$-error contraction factor approaches
$(1+\frac{1+2 \lambda}{2 \beta_s^{-2} +2 \lambda})^{-\frac{1}{2}}$ and for $\lambda \to \infty$ its limit is $1/\sqrt{2}$,
which is in accordance with the corresponding estimate for the linear problem, as presented in~\cite{Hong_fixed-stress_2020}.
\end{remark}

\begin{remark} The number of iterations is obviously depending on the choice of the parameter $L$. To find an optimal $L$ is typically not an easy task. We refer to the work \cite{storvik2019optimization} for a detailed study on how to chose an optimal $L$ for the splitting scheme for the quasi-static linear Biot model.
    \end{remark}
   

\section{Numerical results}\label{sec4}

The aim of this section is to numerically test the performance of the 
proposed fixed-stress Algorithm~\ref{alg1}.
To this end, we consider the following test problems of type~\eqref{eq:nlBiot} 
which differ only in the definition of the permeability 
coefficient function $K$, namely
\begin{enumerate}[label=(\roman*)]
\item[(o)] $K=K_0$, i.e., a linear model;
\item $K = K(\div \bmu) = K_0 + K_1 (\div \bmu)^2 $;
\item $K= K(\div \bmu) = (K_0 + K_1 \div \bmu)^2$;
\item $K= K(\div \bmu)=K_0 e^{K_1\div \bmu}$,
\end{enumerate}
where the constants $K_0$ and $K_1$ are specified within the description of the individual test settings.

In all computations we have used homogeneous Dirichlet boundary conditions 
for $\bmu$ and homogeneous Neumann boundary conditions for $p$. 
The chosen computational domain $\Omega$ is the L-shaped 
region $\Omega = (0,1)\times (0,1)\setminus [0.5,1)\times [0.5,1)$, 
which is a standard choice in academic 
test setting.

The right-hand sides in all tests are specified as 
$$\bmf = -\div( \ep(\bmu_{\text{ex}}) + \lambda  \div \bmu_{\text{ex}} I)+ \nabla p_{\text{ex}} 
$$
$$
g = -\div \bmu_{\text{ex}}+\tau K_0 \Delta p_{\text{ex}}
- S p_{\text{ex}}
$$
where 
$$
p_{\text{ex}}= \phi(x,y) - \frac{1}{\vert\Omega\vert}\int_{\Omega}\phi(x,y) \text{d}x \text{d}y
\in L_2^0(\Omega), \quad \bmu_{\text{ex}} = 
0.01
\left(\frac{\partial \phi}{\partial y},\frac{\partial \phi}{\partial x}\right)
$$
and
$$
\phi = (\sin(2\pi x) \sin(2 \pi y))^2. 
$$

Following the theoretical findings, the employed stabilization parameter 
$L$ in the fixed-stress iteration is 
\begin{equation}\label{L_numerics}
    L: = L^*= \frac{1}{\lambda+1/2},
\end{equation}
see also Remark~\ref{remark_L}, unless specified explicitly otherwise. 

The numerical tests are performed on meshes with mesh parameter $h$ where all meshes are derived 
via uniform refinement of the coarse mesh given on the left of Fig.~\ref{fig:mesh_and_u}
for which $h=1/16$.

On the right of Fig.~\ref{fig:mesh_and_u} 
we have plotted the solid displacement $\bmu$ which has been computed for the 
linear model for the choice of parameters 
$h=1/32$, $\tau=0.01$,
$S=10^{-4}$, $K_0=10^{-6}$, 
     and $\lambda=10^2 $. The corresponding 
$p$ has been shown on the left of Fig.~\ref{fig:p}, whereas on the right 
of the 
same figure we have presented the solution 
of the non-linear problem (i) for the same choice of parameters where additionally we 
have set $K_1=10^{-1}$.

\begin{figure}[h]
    \centering
    \hspace{-0.3cm}
    \includegraphics[width=0.45\textwidth]{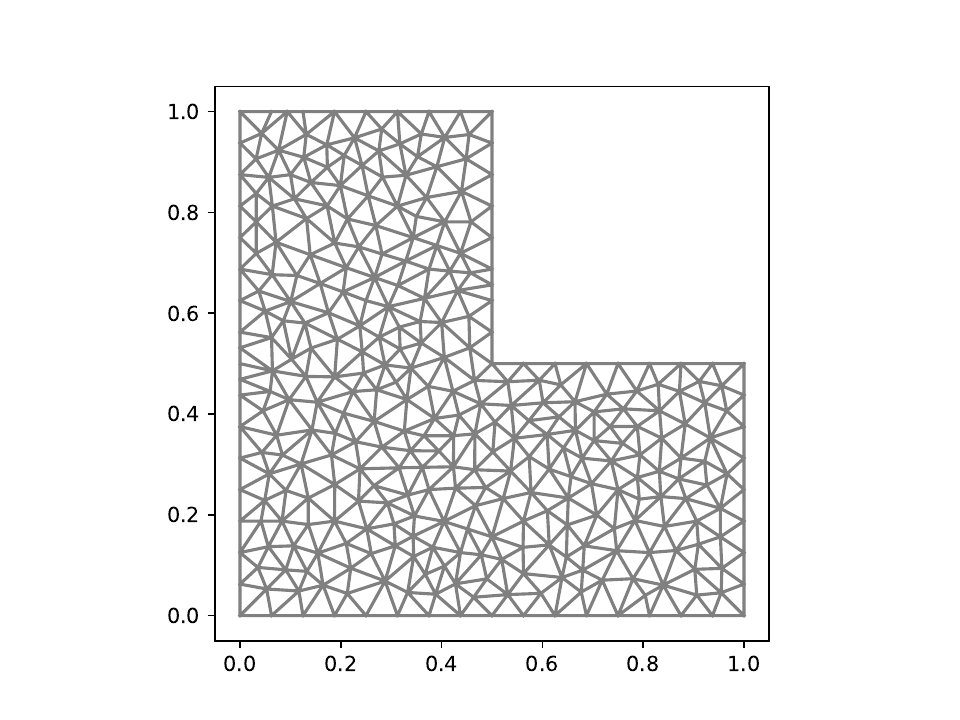}
    \hspace{0.2cm}
    \centering
         \includegraphics[width=0.45\textwidth]{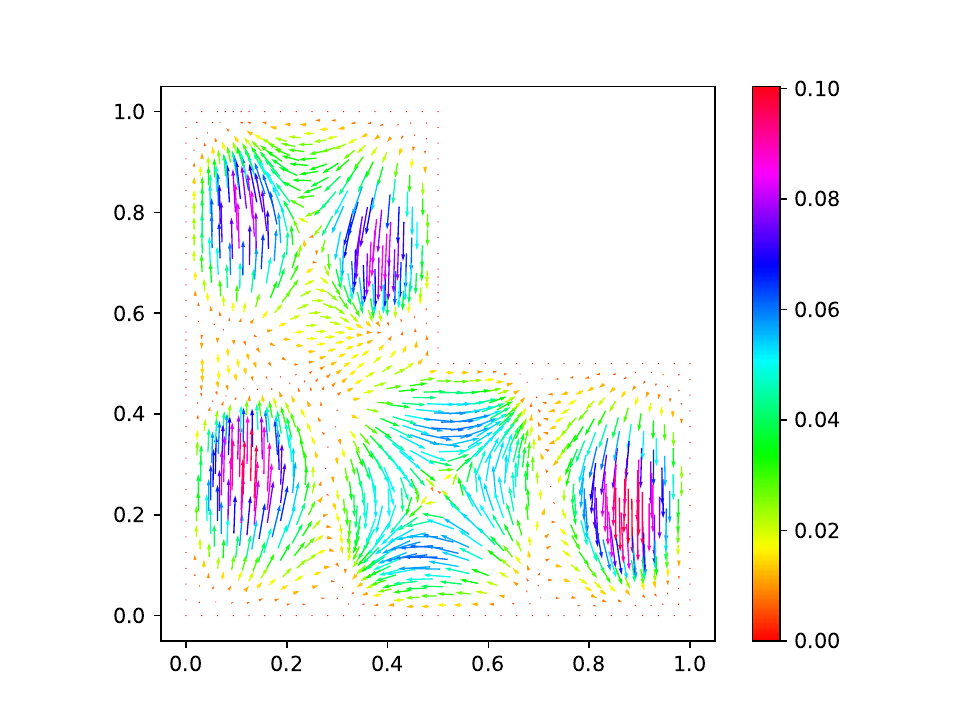}\captionsetup{justification=centering,margin=2cm}
    \caption{Left - triangulation of $\Omega$, $h=1/16$, 
    right - $\bmu$ for model (o), where 
    $h=1/32$, $\tau=0.01$, $S=10^{-4}$, $K_0=10^{-6}$ 
     and $\lambda=10^2 $.}
    \label{fig:mesh_and_u}
\end{figure}

\begin{figure}[h]
     \centering
         \centering
         \includegraphics[width=0.45\textwidth]{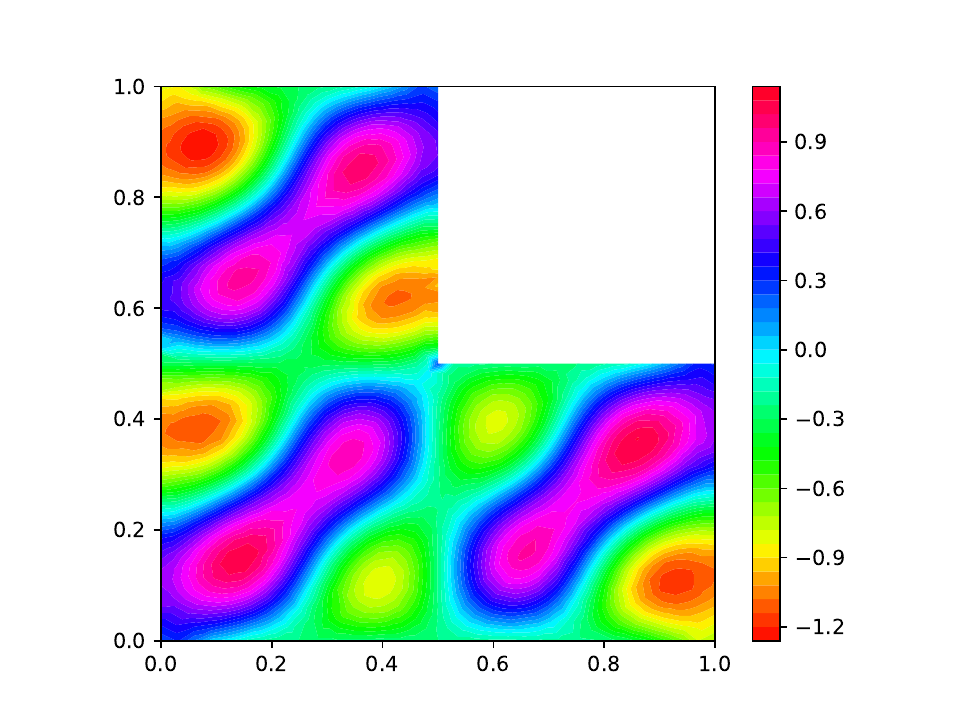}
         \centering
         \includegraphics[width=0.45\textwidth]{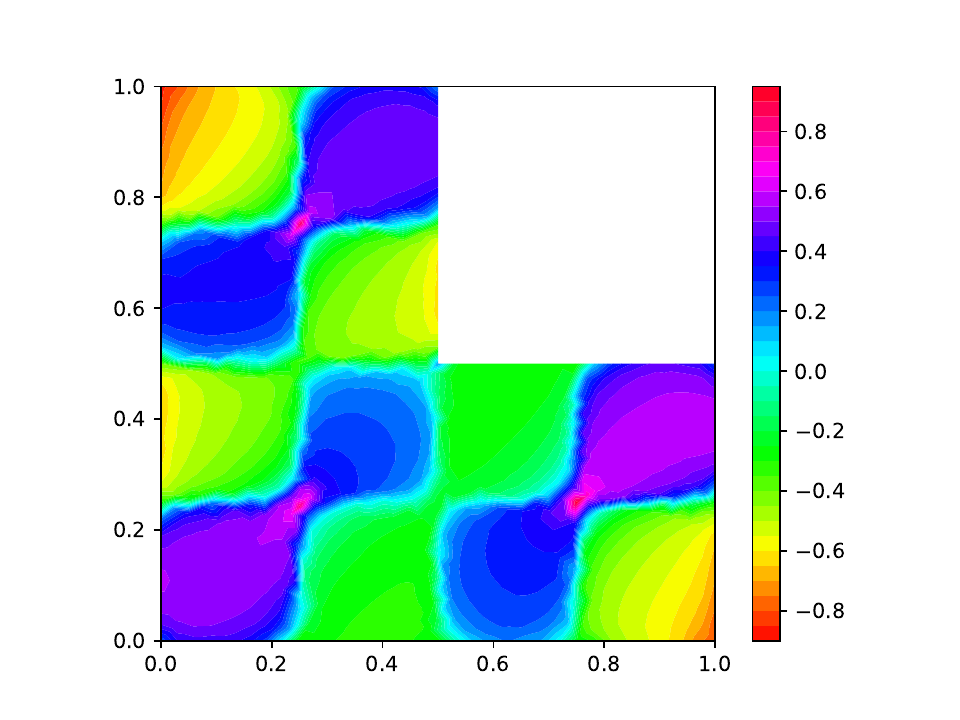}
         \captionsetup{justification=centering,margin=2cm}
         \caption{Left - $p$ for model (o), right - $p$ for model (i). For both $h=1/32$, $\tau=0.01$, $S=10^{-4}$, $K_0=10^{-6}$ 
     and $\lambda=10^2 $, additionally for the non-linear model we 
     have set $K_1=10^{-1}$.}
    \label{fig:p}
\end{figure}

To ascertain the robustness of the proposed fixed-stress method 
we vary the involved model, discretization and stabilization 
parameters. In all test settings we have tried to capture the 
most interesting ranges of the latter and to 
challenge the theoretical results.

The stopping criterion for the fixed-stress algorithm has been chosen to be
\begin{equation}\label{stopping}
    \Vert x_{k+1}-x_{k}\Vert < 10^{-6},
\end{equation}
where $x_{k}$ denotes the $k$-th iterate of 
the proposed fixed-stress method.

The numerical experiments have been performed using the FEniCS 
open-source computing platform for 
solving partial differential equations, \cite{AlnaesBlechta2015a,LoggMardalEtAl2012a}. 

\subsection{Test setting (o)}

In the focus of the first group of experiments is the 
linear model. The main purpose of these tests is to determine how the proposed 
fixed-stress method behaves in terms of number of 
fixed stress iterations when varying the model parameters and 
later compare with the results obtained for the non-linear models.

\begin{figure}[h]
    \centering
    \includegraphics[width=6cm]{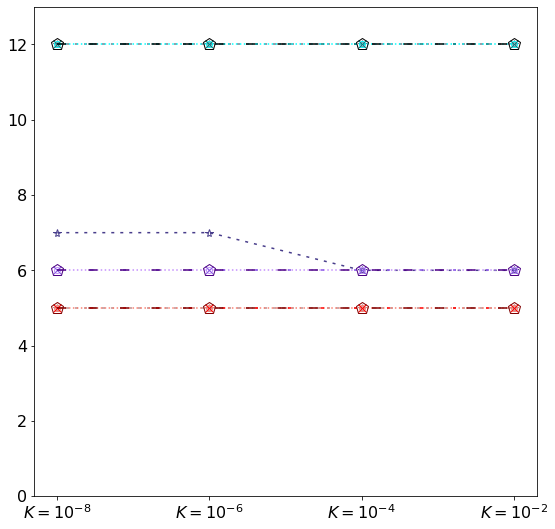}
    \caption{Number of fixed-stress iterations for the linear model where 
    we have set $\tau = 0.01$, $S=10^{-4}$ and $h\in\{1/16,1/32,1/64,1/128\}$. 
    Blue range of colors - $\lambda=10^1$, violet range of colors - $\lambda=10^2$ 
    (single dotted line for $h=1/16$), 
    red range of colors - $\lambda = 10^3$.}
    \label{fig:lin_o}
\end{figure}

As can be seen from Fig.~\ref{fig:lin_o}, the method shows robust behaviour in all tested parameter regimes.

\subsection{Test setting (i)}

Subject of testing in this subsection is the Biot model with 
non-linear permeability coefficient $K(\div \bmu) = K_0 + K_1 (\div \bmu)^2$. 

On Fig.~\ref{fig:nonlin_i_1} the number of fixed-stress iterations are plotted for 
different model and discretization parameters where two different stopping criteria 
have been considered. 
The first is as defined in~\eqref{stopping} and the second one is a residual reduction 
by factor $10^{6}$, i.e.,
\begin{equation*}
    \frac{\Vert r_k\Vert}
    {\Vert r_0\Vert}<10^{-6},
\end{equation*}
where $r_k$ denotes the $k$-th residual of the fixed-stress scheme. We observe that 
the number of iterations when using the second stopping criterion is always smaller and, 
furthermore, that this number is independent of the considered discretization and model 
parameters. Though slightly varying, the number of iterations when using the 
stopping criterion~\eqref{stopping} also show the robustness of the studied iterative method.
\begin{figure}[h]
    \centering
    \includegraphics[width=\textwidth]{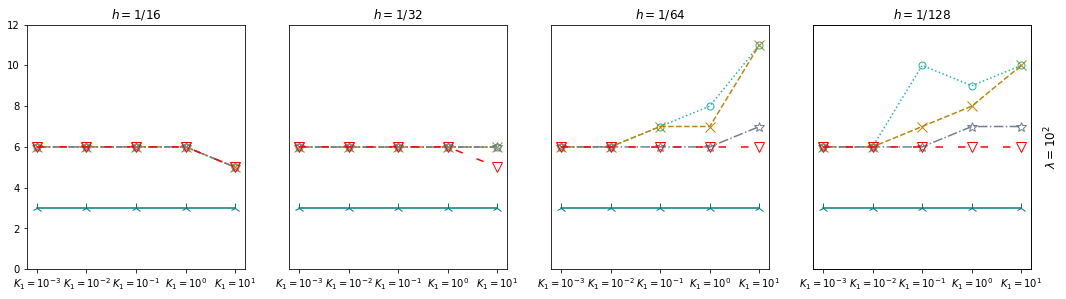}
   \caption{Number of fixed-stress iterations for $K= K_0 + K_1 (\div \bmu)^2$, 
$S=10^{-4}$ where we have set $\tau = 0.01$, $S=10^{-4}$ 
and $\lambda=10^2$. 
Red line - $K_0=10^{-8}$, gray line - $K_0=10^{-6}$, 
yellow line - $K_0 = 10^{-4}$, blue line - $K_0=10^{-2}$. 
The blue-green line shows the results for a different stopping 
criterion, namely a residual reduction by factor $10^{6}$, 
which are identical for the four different choices of $K_0$.
}
    \label{fig:nonlin_i_1}
\end{figure}

For this choice of permeability we have additionally performed a series of tests 
where we have primarily varied $\tau$. It is interesting to observe 
on Fig.~\ref{fig:nonlin_i_2} that though for 
bigger $\tau$ we have the smallest number of iterations when solving on the coarser 
meshes, on the finest mesh the algorithm converges only for the two smallest choices of $\tau$.  
This artefact, however, is in accordance with the analysis presented in this 
paper suggesting 
that for smaller mesh parameter $h$ we might need a smaller time-step $\tau$ to 
guarantee the convergence of the fixed-stress split method.
\begin{figure}[h]
    \centering
    \includegraphics[width=6cm]{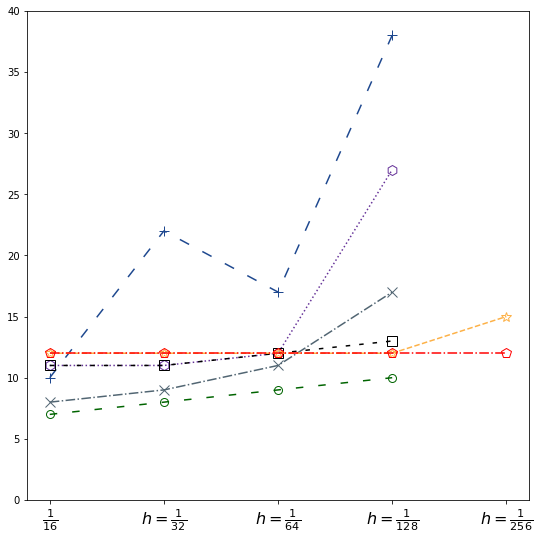}
    \caption{Number of fixed-stress iterations for $K= K_0 + K_1 (\div \bmu)^2$. Red line - 
    $\tau=0.0001$, orange line - $\tau = 0.0005$, 
    black line - $\tau=0.001$, violet line - $\tau=0.005$, blue line - $\tau=0.01$, 
    grey line - $\tau=0.05$, green line - $\tau=0.1$. 
    Missing points means that the algorithm does not converge for the corresponding parameter values.}
    \label{fig:nonlin_i_2}
\end{figure}


\subsection{Test setting (ii)}

In this subsection we want to test the effect of the stabilization parameter $L$ on the convergence 
of the considered iterative scheme. The studied model here involves the non-linear permeability 
$K(\div \bmu) = (K_0 + K_1 \div \bmu)^2$.
\begin{figure}[h]
    \centering
    \includegraphics[width=\textwidth]{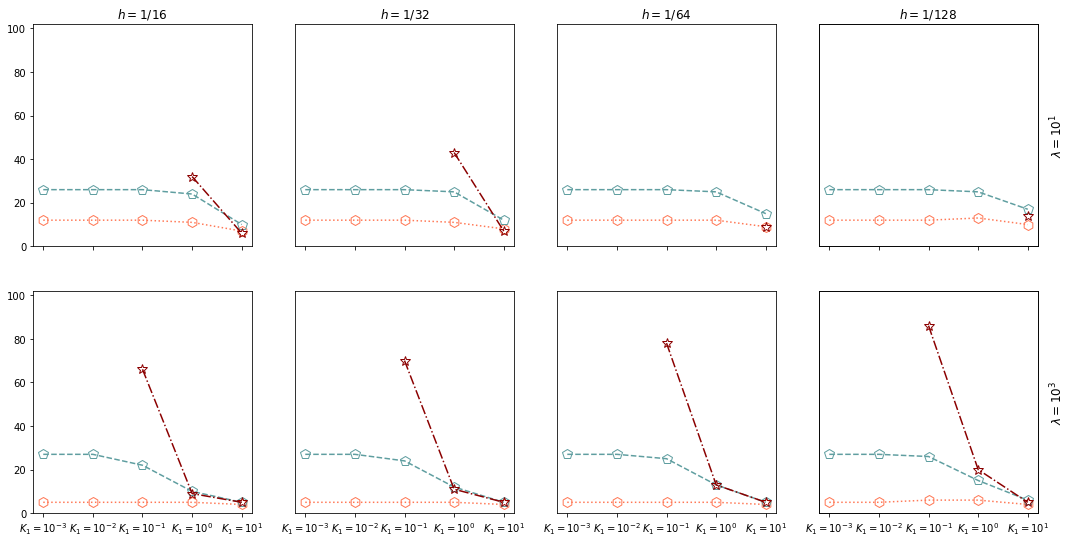}
    \caption{Number of fixed-stress iterations for 
    $K= K(\div \bmu) = (K_0 + K_1 \div \bmu)^2$. 
    We have set $\tau=0.01$ and $S=10^{-4}$. Pink line - stabilization parameter $L=L^*$ as in~\eqref{L_numerics}, 
    blue line $L=2L^*$, red line 
    $L=L^*/2$. Missing points means no convergence within 100 iterations.}
    \label{fig:nonlin_ii}
\end{figure}

The results in Fig.~\ref{fig:nonlin_ii} 
demonstrate the expected robust 
convergent behavior of the method 
for $L=L^*$, chosen as in~\eqref{L_numerics}, as well as for $L=2L^*$, 
which is in accordance 
with the developed theory.
Moreover, when $L$ is too small, as in the example when 
$L=L^*/2$, 
we observe that the algorithm indeed does not converge in some parameter regimes. 
The tests indicate also that the method performs better when 
$L$ is smaller but satisfies the theoretical bounds.

\subsection{Test setting (iii)}

In our last setting we tested the algorithm for a wide range of model and discretization 
parameters when 
$K= K_0 e^{K_1\div \bmu}$. As can be seen from Fig.~\ref{fig:nonlin_iii}, 
the method shows robust behavior with respect to varying all parameters.
\begin{figure}[h]
    \centering
    \includegraphics[width=6cm]{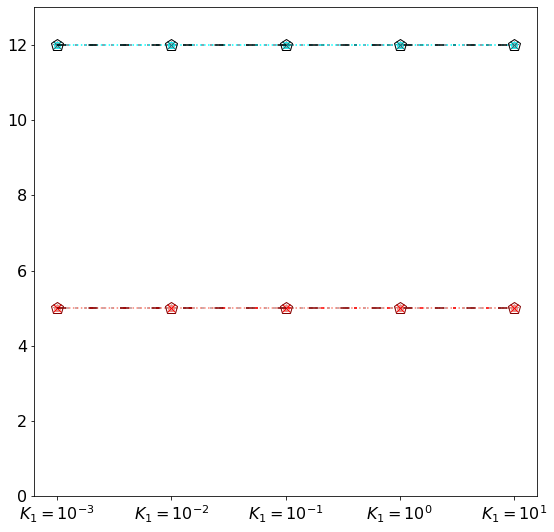}
   \caption{Number of fixed-stress iterations for $K= K_0 e^{K_1\div \bmu}$, 
$S=10^{-4}$, $K_0\in\{10^{-8},10^{-6},10^{-4},10^{-2}\}$, 
$h\in \{1/16,1/32,1/64,1/128\}$. 
Blue range of colors - $\lambda = 10^1$, 
red range of colors - $\lambda= 10^3$.
}
    \label{fig:nonlin_iii}
\end{figure}

\section{Conclusions}\label{sec5}

In this paper we have developed a new 
fixed-stress split method
for a nonlinear poroelasticity model 
in which the
permeability depends on the divergence of the displacement. 
We have proven that for sufficiently small 
time steps this algorithm converges linearly and the error contraction factor 
is strictly less than one, independently of 
all model parameters.

Moreover, our theoretical findings are fully supported and justified by a 
series of numerical tests which also 
have shown the robustness and computational 
efficiency of the proposed fixed-stress method.

\section*{Acknowledgements}
JK acknowledges the funding by the German Science Fund (DFG) - project “Physics-oriented solvers
for multicompartmental poromechanics” under grant number 456235063.
FAR acknowledges the support of the VISTA program, The Norwegian Academy of Science and Letters and Equinor.

\bibliography{FS_nonlin_Biot}

\end{document}